\theoremstyle{plain}
\newtheorem{theorem}{Theorem}[section]
\newtheorem{proposition}[theorem]{Proposition}
\newtheorem{lemma}[theorem]{Lemma}
\theoremstyle{definition}
\newcommand{\HS}{Hajnal-Szemer\'{e}di}
\newcommand{\Z}{\mathbb{Z}}
\newcommand{\lilOh}[1]{o(#1)}
\newcommand{\paren}[1]{\left( #1 \right)}
\newcommand{\set}[1]{\left\{ #1 \right\}}
\newtheorem{obs}{Observation}
\newtheorem{conj}{Conjecture}
\newtheorem{ques}{Question}
\newtheorem{fact}{Fact}
\date{}
\title{Towards a weighted version of the \HS\ Theorem}
\author
{J\'ozsef Balogh \thanks{Department of Mathematical Sciences, University of
Illinois, Urbana, IL, 61801, Supported in part by  NSF CAREER Grant DMS-0745185, UIUC Campus Research Board Grants 09072 and 11067, OTKA Grant K76099, and TAMOP-4.2.1/B-09/1/KONV-2010-0005 project.   Work was partly done while at Department of Mathematics, University of California,
San Diego La Jolla, CA, 92093.}
\and
Graeme Kemkes \thanks{Department of Mathematics, Ryerson University
Toronto, ON, M5B 2K3}
\and
Choongbum Lee \thanks{Department of Mathematics, UCLA, Los
Angeles, CA, 90095. Email: choongbum.lee@gmail.com. Research
supported in part by a Samsung Scholarship.}
\and
Stephen J. Young \thanks{Department of Mathematics, University of California,
San Diego, La Jolla, CA, 92093}
}
\begin{document}

\maketitle

\begin{abstract}
For a positive integer $r \ge 2$, a $K_r$-factor of a graph is a collection
of vertex-disjoint copies of $K_r$ which covers all the vertices of the
given graph.
The celebrated theorem of Hajnal and Szemer\'edi asserts that
every graph on $n$ vertices with minimum degree at least $(1-\frac{1}{r})n$ 
contains a $K_r$-factor. In this note, we propose
investigating the relation between minimum degree and existence
of perfect $K_r$-packing for edge-weighted graphs.
The main question we study is the following. Suppose that
a positive integer $r \ge 2$ and a real $t \in [0,1]$ is given.
What is the minimum weighted degree of $K_n$ that guarantees the
existence of a $K_r$-factor such that every factor has total edge
weight at least $t\binom{r}{2}$? We provide some lower and upper bounds
and make a conjecture on the asymptotics of the threshold as
$n$ goes to infinity.  This is the long version of a ``problem paper''
in \emph{Combinatorics, Probability and Computing}.
\end{abstract}

\section{Introduction}
\label{firstpage}

Many results in graph theory study the relation between the
minimum degree of a given graph and its spanning subgraphs. For example,
Dirac's theorem asserts that a graph on $n$ vertices with minimum degree
at least $\lceil \frac{n}{2} \rceil$ contains a Hamilton cycle. 
Hajnal and Szemer\'edi \cite{Hajnal:EquitablePartition}
proved that every graph on $n \in r\Z$ vertices
with minimum degree at least $(1-\frac{1}{r})n$ contains a
spanning subgraph consisting of $\frac{n}{r}$ vertex-disjoint copies of $K_r$
(we call such a subgraph a $K_r$-factor). 

In this note 
we  propose investigating this relation in edge-weighted graphs. 
As a concrete problem, we study
the particular case when the spanning subgraph is the graph formed
by vertex-disjoint copies of $K_r$ (in other words, we would like to
extend the Hajnal-Szemer\'edi theorem to edge-weighted graphs).
Suppose we equip the complete graph $K_n$ with edge weights
$w \colon E(K_n) \rightarrow [0,1]$. 
For a given weighted graph and vertex $v$ we let $\deg_w(v)$ 
denote the weighted degree of the vertex $v$. Let $\delta_w(G)$ be 
the minimum weighted  degree of the graph $G$. 
The main question can be formulated as the following: 
How large must $\delta_w(K_n)$ be to guarantee
that there exists a $K_r$-factor such that every factor has total edge
weight at least $t\binom{r}{2}$ for some given $t \in [0,1]$?

More formally, for $n \in r\Z$ let $\mathcal{W}(r,t,n)$ be the
collection of edge weightings on $K_{n}$ such that every $K_r$-factor
has a clique with weight strictly smaller than $t\binom{r}{2}$.  We
then define \[ \delta(r,t,n) =  \sup_{w
  \in \mathcal{W}(r,t,n)} \delta_w(K_{n}) \ \ \textrm{ and }
\ \ \delta(r,t) = \limsup_{n \rightarrow \infty} \frac{\delta(r,t,n)}{n}.\]

\noindent The main open question that we raise is the following.
\begin{ques}
Determine the value of $\delta(r,t)$ for all $r$ and $t$.
\end{ques}

Let $\mathcal{W}^*(r,t,n)$ be the collection of edge weightings of $K_n$ such
that every $K_r$-factor has a clique with weight at most
$t\binom{r}{2}$ (instead of strictly smaller than $t\binom{r}{2}$), and 
define the functions $\delta^*(r,t,n)$ and $\delta^*(r,t)$ accordingly.

\begin{proposition} \label{prop:continuity}
For all $r,t$, and $n$, $\delta(r,t,n) = \delta^*(r,t,n)$. Therefore,
$\delta(r,t) = \delta^*(r,t)$.
\end{proposition}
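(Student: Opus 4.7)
The plan is to establish both inequalities separately. The direction $\delta(r,t,n) \leq \delta^*(r,t,n)$ is immediate from the containment $\mathcal{W}(r,t,n) \subseteq \mathcal{W}^*(r,t,n)$: a weighting in which every $K_r$-factor contains a clique of weight strictly less than $t\binom{r}{2}$ certainly has, in every factor, a clique of weight at most $t\binom{r}{2}$, so the supremum can only grow when we enlarge the feasible set.

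For the reverse inequality, I will use a multiplicative perturbation. Assume $t>0$ (the case $t=0$ is degenerate because $\mathcal{W}(r,0,n)$ is empty, as no nonnegative sum is strictly negative). Fix $w \in \mathcal{W}^*(r,t,n)$ and $\epsilon \in (0,1)$, and set $w'(e) = (1-\epsilon) w(e)$ for every edge $e$. Since $w(e) \in [0,1]$, we still have $w'(e) \in [0,1]$. For any $K_r$-factor $F$, the assumption $w \in \mathcal{W}^*(r,t,n)$ supplies a clique $C \in F$ with $w(C) \leq t\binom{r}{2}$; scaling gives
\[ w'(C) = (1-\epsilon) w(C) \leq (1-\epsilon)\, t \binom{r}{2} < t \binom{r}{2}, \]
so $w' \in \mathcal{W}(r,t,n)$. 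Because the weighted degree is linear in the weighting, $\delta_{w'}(K_n) = (1-\epsilon)\delta_w(K_n)$, and hence $\delta(r,t,n) \geq (1-\epsilon)\delta_w(K_n)$. Letting $\epsilon \to 0^+$ and then taking the supremum over $w \in \mathcal{W}^*(r,t,n)$ yields $\delta(r,t,n) \geq \delta^*(r,t,n)$.

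Combining the two inequalities gives the first claim, and the second statement $\delta(r,t) = \delta^*(r,t)$ follows by taking $\limsup$ as $n \to \infty$. There is no substantive obstacle here: the essential observation is that uniform scaling by a factor $(1-\epsilon)$ converts weak inequalities into strict ones while shrinking the minimum weighted degree by a negligible amount, so the strict and non-strict definitions give the same supremum.
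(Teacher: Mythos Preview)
Your proof is correct and follows essentially the same approach as the paper: both use the multiplicative perturbation $w' = (1-\epsilon)w$ to convert the non-strict inequality into a strict one at negligible cost to the minimum weighted degree. The only difference is that the paper first invokes compactness of $\mathcal{W}^*(r,t,n)$ to select a maximizer and then scales it, whereas you scale an arbitrary $w$ and take the supremum afterward---your version is slightly more elementary since it sidesteps the compactness step entirely.
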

\begin{proof}
  The inequality $\delta(r,t,n) \le \delta^*(r,t,n)$ easily follows
  from the definition. Noting that the complement of
  $\mathcal{W}^{*}(r,t,n)$ is open in the set of all real-valued edge
  weightings, the set $\mathcal{W}^{*}(r,t,n)$ is compact.  Thus there
  is a weight function $w \in \mathcal{W}^{*}(r,t,n)$ so that
  $\delta_w(K_n) = \delta^{*}(r,t,n)$. Let $\varepsilon < 1$ be an
  arbitrary positive real, and let $w'$ be the weight function
  obtained from $w$ by multiplying $1-\varepsilon$ to all the weights.
  One can easily see that $w' \in \mathcal{W}(r,t,n)$, and thus
  $\delta(r,t,n) \ge (1-\varepsilon)\delta^*(r,t,n)$.  Thus as
  $\varepsilon$ tends to 0, we see that $\delta(r,t,n) \ge
  \delta^*(r,t,n)$.  This concludes the proof.
\end{proof}

The proposition above shows that if an edge-weighting of $K_n$ 
has minimum degree
greater than $\delta(r,t,n)$, then there exists a $K_r$-factor
such that every copy of $K_r$ has weight greater than $t\binom{r}{2}$.
Therefore, the \HS\ theorem in fact is a special case 
of our problem when $t = (\binom{r}{2} - 1)/\binom{r}{2}$ 
where we only consider the  integer weights $\{0,1\}$. 
Thus we believe that the following special case is an important and 
interesting instance of the problem corresponding to the
\HS\ theorem for $r=3$ (which has been first proved by Corr\'adi and Hajnal \cite{Corradi:IndepCircuits}).
\begin{ques}
What is the value of $\delta(3,\frac{2}{3})$?
\end{ques}

In the rest of our note we describe our partial results toward answering Question 1.

\section{Lower bound} \label{sec:lower}

It is not too difficult to deduce the bound $\delta(r,t) \ge (1-1/r)t$ from
the  graph showing the sharpness of the  Hajnal-Szemer\'edi theorem. 
Our first proposition provides a better lower bound to this function.

\begin{proposition}\label{P:lb}
The following holds for every integer $r \ge 2$ and real $t \in (0,1]$:
\[ \delta(r,t) \geq \frac{1}{r} + \left(1-\frac{1}{r}\right)t. \]
\end{proposition}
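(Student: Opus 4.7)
The plan is to exhibit, for each $n$ divisible by $r$, an edge-weighting $w$ of $K_n$ that lies in $\mathcal{W}^{*}(r,t,n)$ and has weighted minimum degree at least $\frac{1+(r-1)t}{r}n - 1$. Combined with Proposition~\ref{prop:continuity}, dividing by $n$ and taking a limsup then yields $\delta(r,t) \ge \frac{1}{r} + (1-\frac{1}{r})t$.

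The construction is a weighted analogue of the standard Hajnal--Szemer\'edi extremal example. I would partition $V(K_n)$ into $r$ parts $V_1,\ldots,V_r$ with $|V_1|=n/r-1$, $|V_2|=n/r+1$, and $|V_i|=n/r$ for $3\le i\le r$. Then assign weight $1$ to every edge having exactly one endpoint in $V_1$, and weight $t$ to every other edge (including edges inside any single $V_i$ and edges between $V_i$ and $V_j$ with $i,j\ge 2$).

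The crucial check that $w\in \mathcal{W}^{*}(r,t,n)$ is a single pigeonhole: a $K_r$-factor consists of $n/r$ disjoint cliques, but $|V_1|=n/r-1$, so at least one clique $C$ of the factor must be disjoint from $V_1$. Every edge of $C$ then has weight $t$ by construction, giving $w(C)=t\binom{r}{2}$, which is precisely the witness required for membership in $\mathcal{W}^{*}$.

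The remaining step is the degree computation. A vertex in $V_i$ with $i\ge 2$ contributes $|V_1|=n/r-1$ from its unit-weight edges into $V_1$ and $t\cdot(r-1)n/r$ from its weight-$t$ edges elsewhere, totaling $\frac{1+(r-1)t}{r}n - 1$, which matches the target exactly. A vertex in $V_1$ has weighted degree $\frac{r-1+t}{r}n + O(1)$, which exceeds the previous value by $\frac{(r-2)(1-t)}{r}n + O(1)\ge 0$ using only $r\ge 2$ and $t\le 1$. The main conceptual step, such as it is, lies in balancing the two conditions when designing the weighting: the Hajnal--Szemer\'edi-style imbalance between $|V_1|$ and the rest forces a clique outside $V_1$, while placing weight $t$ on exactly the edges not incident to $V_1$ ensures that this forced clique saturates the threshold. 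Once the construction is in hand, everything else is a short and elementary verification.
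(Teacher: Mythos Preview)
Your proof is correct and essentially identical to the paper's: isolate a set of size $n/r-1$, give weight $1$ to edges meeting it and weight $t$ elsewhere, then use pigeonhole to force a clique of weight $t\binom{r}{2}$ outside that set. The further subdivision of the complement into $V_2,\ldots,V_r$ plays no role---the paper simply takes $A=V_1$ and $B=V\setminus V_1$---and apart from this cosmetic framing (and the immaterial choice of weight on edges inside $V_1$) the construction and degree computation coincide.
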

\begin{proof}
  Let $n \in r\Z$ with $n > r$ and let $k = \frac{n}{r}$.
  Let $A$ be an arbitrary set of $k-1$ vertices and let
  $B$ be the remaining $k(r-1)+1$ vertices. 
  Consider the weight function $w$ that assigns weight
  $t$ to edges whose  endpoints are both in $B$ and weight 1 to all other
  edges.  By the cardinality of $A$, we see that every $K_r$-factor must
  contain a clique that lies entirely within $B$. Since our weight function
  gives weight at most $t\binom{r}{2}$ to this clique, we see that $w \in
  \mathcal{W}^*(r,t,n)$.  Further, $\delta_{w}(K_n) =
  \min\{n-1, k-1 + t(n-k)\}$.  But we have that 
\[ k-1 + t(n-k) = \left(\frac{1}{r} - \frac{1}{n} +
  t(1-\frac{1}{r})\right)n.\]  
  Therefore by Proposition \ref{prop:continuity}, we have
  $\delta(r,t,n) \ge \left(\frac{1}{r} - \frac{1}{n} +
  t(1-\frac{1}{r})\right)n$ and $\delta(r,t) \ge \frac{1}{r} + \left(1-\frac{1}{r}\right)t$.
\end{proof}

Proposition \ref{P:lb} illustrates the fundamental difference between 
the minimum degree threshold for containing a $K_r$-factor in
graphs and edge-weighted graphs. For example, when $r=3$, we see that
$\delta(3,2/3) \ge 7/9$, while the corresponding function 
for graphs has value $2/3$ by the Hajnal-Szemer\'edi theorem.
This difference suggests that we indeed need some
new ideas and techniques to solve our problem.

\section{Upper bound}

Next, we establish an upper bound on $\delta(r,t)$.
To do so, it is helpful to consider the graph induced 
by the edges of heavy weights in a given edge-weighted graph.
Thus, given an edge-weighted graph $G_w$, 
we denote by $G_w(t)$ the subgraph of $K_n$ consisting of edges of
weight at least $t$. For $r=2$, it is easy to establish the
correct value of the function $\delta(2,t)$.

\begin{obs} \label{obs:simplecase}
 For every $t \in (0,1]$ we have $\delta(2,t) = \frac{1+t}{2}$.
\end{obs}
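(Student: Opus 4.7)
The plan is to combine Proposition \ref{P:lb} for the lower bound with a single application of Dirac's theorem to the auxiliary graph $G_w(t)$ for the upper bound. Setting $r = 2$ in Proposition \ref{P:lb} immediately yields $\delta(2,t) \ge \tfrac{1+t}{2}$, so I only need to supply the matching upper bound $\delta(2,t) \le \tfrac{1+t}{2}$.

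For the upper bound, I would fix an even $n$ and an arbitrary weight function $w$ on $K_n$ with $\delta_w(K_n) > \tfrac{1+t}{2}\,n$, and show that $G_w(t)$ satisfies Dirac's condition. For any vertex $v$, let $d_v$ denote its degree in $G_w(t)$. Edges at $v$ lying in $G_w(t)$ have weight at most $1$, while edges at $v$ outside $G_w(t)$ have weight strictly less than $t$, so
\[
\tfrac{1+t}{2}\,n \;<\; \deg_w(v) \;\le\; d_v + (n-1-d_v)\,t \;=\; (1-t)\,d_v + (n-1)\,t.
\]
When $t < 1$, rearranging gives $d_v > \tfrac{n}{2} + \tfrac{t}{1-t} > \tfrac{n}{2}$, so $G_w(t)$ contains a Hamilton cycle by Dirac's theorem; taking alternate edges produces a perfect matching in which every edge has weight at least $t$. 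This exhibits a $K_2$-factor whose every clique has weight at least $t\binom{2}{2}$, so $w \notin \mathcal{W}(2,t,n)$, which yields $\delta(2,t,n) \le \tfrac{1+t}{2}\,n$ and therefore $\delta(2,t) \le \tfrac{1+t}{2}$. The edge case $t = 1$ is trivial since $\delta_w(K_n) \le n-1$ forces $\delta(2,1) \le 1$ immediately, matching the lower bound.

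I do not anticipate any real obstacle: everything reduces to a short degree calculation followed by a single appeal to Dirac's theorem. The one point that requires attention is routing the strict inequality $\delta_w(K_n) > \tfrac{1+t}{2}\,n$ through the calculation to obtain the strict bound $d_v > n/2$ that Dirac's theorem requires, but the rearrangement above accomplishes this cleanly.
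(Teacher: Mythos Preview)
Your proof is correct and follows essentially the same approach as the paper: establish the lower bound via Proposition~\ref{P:lb}, then bound the minimum degree of $G_w(t)$ from below by $n/2$ via the same weighted-degree inequality and invoke a classical result to extract a perfect matching. The only cosmetic difference is that the paper cites the \HS\ theorem (for $r=2$) directly to obtain the $K_2$-factor, whereas you route through Dirac's theorem and take alternate edges of the Hamilton cycle; both rest on the same degree hypothesis.
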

\begin{proof}
The lower bound on $\delta(2,t)$ follows from Proposition \ref{P:lb}, and thus it 
suffices to establish the upper bound.
Let $w$ be a weight function such
that $\delta_w(K_n) \geq \frac{1+t}{2}n$.  Now
for any vertex $v \in G_w(t)$, we have $\deg_w(v) < (n-1-\deg(v))
\cdot t +
\deg(v)\cdot 1$, where $\deg(v)$ is the degree of $v$ in $G_w(t)$.  But then the minimum weighted degree condition implies
that $\deg(v) \geq \frac{n}{2}$, and so by the  \HS\ theorem there is a 
$K_2$-factor in $G_w(t)$. By the definition of $G_w(t)$, this establishes
the bound $\delta(2,t) \le \frac{1+t}{2}$.
\end{proof}

Even for $r \ge 3$, if $t$ is small enough, then we can determine the correct value of the function $\delta(r,t)$.

\begin{theorem} \label{thm:upper}
For every $r \ge 3$, there exists a positive real $t_r$ such that 
for every $t \in (0,t_r)$ we have 
\[ \delta(r,t) = \frac{1}{r} + \left(1 - \frac{1}{r}\right)t. \]
\end{theorem}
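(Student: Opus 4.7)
The lower bound is provided by Proposition \ref{P:lb}, so only the matching upper bound $\delta(r,t) \le \frac{1}{r} + (1-\frac{1}{r})t$ for sufficiently small $t$ requires proof. Suppose we are given an edge-weighting $w$ of $K_n$ with $\delta_w(K_n) \ge (\frac{1}{r} + (1-\frac{1}{r})t + \eta)n$ for some small $\eta > 0$; the aim is to produce a $K_r$-factor in which each clique has weight strictly greater than $t\binom{r}{2}$. The guiding observation, modelled on the extremal construction in Proposition \ref{P:lb}, is that a clique $K$ automatically has weight greater than $t\binom{r}{2}$ as soon as it contains a distinguished ``centre'' vertex $v$ whose $r-1$ edges inside $K$ all have weight at least $\sigma$, for any $\sigma > tr/2$, since then $(r-1)\sigma > t\binom{r}{2}$.

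Accordingly, the plan is to fix $\sigma$ slightly above $tr/2$ and, writing $N_\sigma(v) = \{u : w(uv) \ge \sigma\}$, use the bound $\deg_w(v) \le |N_\sigma(v)| + \sigma(n-1-|N_\sigma(v)|)$ together with the degree hypothesis to obtain
\[
|N_\sigma(v)| \ge \frac{\delta_w - (n-1)\sigma}{1-\sigma} \ge \frac{n}{r} + \Omega(\eta n)
\]
for every $v$, provided $t$ is below some threshold $t_r = t_r(r) > 0$; this is where the smallness hypothesis on $t$ is used. The next step is to select a ``centres'' set $A \subseteq V$ with $|A| = n/r$, by a random choice. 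Finally, construct the $K_r$-factor by finding, in the bipartite graph $B$ between $A$ and $V \setminus A$ consisting of the edges of weight at least $\sigma$, an $(r-1,1)$-factor: an assignment $\phi$ sending each $v \in A$ to an $(r-1)$-set $\phi(v) \subseteq V \setminus A$ of $\sigma$-neighbours of $v$ such that the sets $\phi(v)$ partition $V \setminus A$. The cliques $\{v\} \cup \phi(v)$ then form a $K_r$-factor in which every clique contains $r-1$ edges of weight $\ge \sigma$ and hence has total weight greater than $t\binom{r}{2}$.

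The main obstacle is this last existence statement. By the standard replication trick (replacing each $v \in A$ with $r-1$ copies), the required $(r-1,1)$-factor is equivalent to a perfect matching in a balanced bipartite graph, and hence, by Hall's theorem, to the condition $|N_B(S) \cap A| \ge |S|/(r-1)$ for every $S \subseteq V \setminus A$. For moderate $|S|$ this follows from the lower bound on $|N_\sigma(v)|$ combined with concentration of the random choice of $A$. The delicate case is when $|S|$ is close to $|V \setminus A|$ and Hall's condition becomes essentially tight; handling it likely requires an expansion estimate on $G_w(\sigma)$ or a local modification of $A$ to repair failing blocks, and it is in carefully tracking the various slacks to extract an explicit positive value of $t_r$ that the analysis is most sensitive.
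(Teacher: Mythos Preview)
Your approach has a genuine gap that cannot be repaired by a better choice of $A$ or by local modifications: the guiding observation is too restrictive. Requiring each clique to contain a ``star centre'' with $r-1$ incident edges of weight at least $\sigma>tr/2$ is strictly stronger than requiring the clique to have weight $>t\binom{r}{2}$, and there are weightings satisfying the degree hypothesis in which no $K_r$-factor of the former kind exists at all. Concretely, take $r=3$, partition $V(K_n)$ as $A_0\cup B_0$ with $|A_0|=(\tfrac13-\tfrac{t}{3})n$, give weight $1$ to every edge meeting $A_0$ and weight $\tfrac{3t}{2}$ to every edge inside $B_0$. Then $\delta_w(K_n)=(\tfrac13+\tfrac{2t}{3}+\tfrac{t^2}{2})n+O(1)$, so the hypothesis holds for any $0<\eta<t^2/2$. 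Every triangle inside $B_0$ already has weight $\tfrac{9t}{2}>3t$, so a heavy $K_3$-factor trivially exists. Yet for any $\sigma>3t/2$ the only $\sigma$-edges are those meeting $A_0$, so a triangle possessing a $\sigma$-star centre must contain a vertex of $A_0$; since $|A_0|<n/3$ there are not enough such vertices for $n/3$ disjoint triangles, and your bipartite $(r-1,1)$-factor does not exist no matter how $A$ is selected.

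This example also pinpoints the arithmetic slip: from $\deg_w(v)\le |N_\sigma(v)|+\sigma(n-1-|N_\sigma(v)|)$ one gets
\[
|N_\sigma(v)|-\frac{n}{r}\ \ge\ \frac{1}{1-\sigma}\Bigl(\eta-\frac{(r-1)(r-2)}{2r}\,t\Bigr)n+O(1),
\]
which is \emph{below} $n/r$ once $\eta$ is small; and $\eta$ must be allowed to be arbitrarily small, since $\delta(r,t)$ is defined through a $\limsup$. So the claimed bound $|N_\sigma(v)|\ge n/r+\Omega(\eta n)$ is false, and no threshold $t_r$ depending only on $r$ rescues it.

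The paper proceeds quite differently. It fixes a maximum heavy partial $K_r$-factor $\mathcal{R}$ that, among all such, maximises the number of ``overweight'' edges (weight $\ge\binom{r}{2}t$), assumes $\mathcal{R}$ is not spanning, and analyses how overweight edges can lie between the cliques of $\mathcal{R}$ and a set $L$ of $r$ uncovered vertices. A short sequence of structural facts isolates a set $Y$ of ``non-dominating'' vertices inside certain cliques of $\mathcal{R}$, and an averaging count over all $(r-1)$-subsets of $Y$ together with a vertex of $L$ produces a heavy $K_r$ that (through the structural facts) can be used to enlarge $\mathcal{R}$, a contradiction. The smallness of $t$ enters only to give several linear coefficients the correct sign.
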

\begin{proof}
We have $\delta(r,t) \geq \frac{1}{r} + \left(1 - \frac{1}{r}\right)t$ by Proposition \ref{P:lb}. It 
remains for us to establish the upper bound. Let $\varepsilon$ be an arbitrary positive real. For $n$ sufficiently large, let $w$ be a weight function such
that $\delta_w(K_n) \geq \left(\frac{1}{r} + \left(1 - \frac{1}{r}\right)t + \varepsilon\right)n$.
We will say a copy of a $K_r$
is {\em heavy} if it has weight at least ${r \choose 2} t$. A
collection of vertex-disjoint copies of $K_r$ is {\em heavy} if
each $K_r$ in the collection is heavy. An edge is
{\em overweight} if it has weight at least ${r \choose 2}t$.
Let $t_r$ be a sufficiently small positive real 
depending on $r$ to be determined later.
We will find a heavy $K_r$-factor given that $t < t_r$ and
$n$ is a large enough integer divisible by $r$.

Take a maximum heavy collection of vertex-disjoint copies of $K_r$, that
maximizes the number of overweight edges. 
Call this collection $\mathcal{R}$, and suppose that $|\mathcal{R}|=\rho$.
Denote by $V_R$ be the vertices covered by $\mathcal{R}$, thus
$|V_R| = r\rho$. We may assume that $\rho < \frac{n}{r}$, as otherwise
we have a heavy $K_r$-factor.
Then there exist $r$ distinct vertices 
$v_1, v_2, \cdots, v_r \notin V_R$. Let $L = \{v_1,
v_2, \cdots, v_r\}$. If there is an overweight edge whose 
endpoints are both in $V(K_n)\setminus V_R$, then we can find
a larger collection than $\mathcal{R}$ by taking the union of this edge 
with $r-2$ vertices of $L$.
Thus all the edges within $V(K_n)\setminus V_R$ have weight 
at most ${r \choose 2} t$.

\begin{fact} \label{fact:fact1}
For every $R \in \mathcal{R}$, if there exists an overweight edge
between $V(R)$ and $L$, then there exists a unique vertex
in $R$ which intersects every overweight edge between $V(R)$ and $L$.
\end{fact}
\begin{proof}
Fix a copy of $K_r$ in $\mathcal{R}$ and denote it by $R$.
If there are two vertex-disjoint overweight edges between
$V(R)$ and $L$, then we can find two heavy
vertex-disjoint copies of $K_r$ over $V(R) \cup L$.
Therefore all the overweight edges between $V(R)$ and $L$
share a common endpoint. In particular, there are at 
most $r$ overweight edges between $V(R)$ and $L$.

Now suppose that there are at least two overweight edges between $V(R)$
and $L$, and that the common endpoint is in $L$. Without loss of generality,
let $x \in V(R)$ and $v_1 \in L$ be vertices such that there are at least 
two overweight edges of the form $\{y,v_1\}$ for $y \in V(R) \setminus \{x\}$.
Then by the assumption that we maximized the number of overweight edges,
there are at least two overweight edges among the edges $\{y, x\}$
for $y \in V(R) \setminus \{x\}$ (otherwise we can replace 
$R$ by $R \setminus \{x\} \cup \{v_1\}$). However, 
if this is the case, then we can find two independent overweight edges
in $V(R) \cup L$, and this contradicts
the maximality of $\mathcal{R}$. Thus if there are at least two overweight
edges between $V(R)$ and $L$, then they share a common endpoint in $V(R)$.
\end{proof}

Let $\mathcal{R}'$ be the subset of copies of $K_r$ of $\mathcal{R}$ 
which have at least $r-1$ overweight edges incident to it 
whose other endpoint is in $L$. Let $\rho' = |\mathcal{R}'|$. 

\begin{fact} \label{fact:fact15}
There exists a real $t_r'$ such that if $t < t_r'$, then 
$\rho' \geq 1$.
\end{fact}
\begin{proof}
By Fact \ref{fact:fact1} we have
\begin{align*} 
  (1 + (r-1)t)n &\le \sum_{i=1}^{r} \deg_w(v_i) \le 
  \left(r + (r^2-r){r \choose 2}t\right)\rho' \\
 & \qquad + \left( r-2 + (r^2-r+2) {r \choose 2}t\right) (\rho-\rho') + r {r \choose 2}t  (n - r\rho).
\end{align*}
By way of contradiction, let $\rho' = 0$ and thus,
\[ (1+ (r-1)t)n \leq (r-2)\left(1-\binom{r}{2}t\right)\rho + r\binom{r}{2}t n.
\]
Now for sufficiently small $t$, the coefficient of $\rho$ is positive
so we may substitute $\rho \leq \frac{n}{r}$ and obtain,
\[ 0 \leq \left( \frac{r-2}{r}\left(1-\binom{r}{2}t\right) +
  r\binom{r}{2}t -1 -(r-1)t\right)n =\left( (r-1)\binom{r}{2}t - \frac{2}{r} \right)n \]
If $t$ is sufficiently small this is a contradiction, and thus $\rho'
\geq 1$.
\end{proof}

\begin{fact} \label{fact:fact2}
For $R \in \mathcal{R}'$, there exists a unique vertex $x_R \in V(R)$
incident to all the overweight edges within $V(R) \cup L$. Moreover, 
all the edges incident to $x_R$ within $R$ are overweight.
\end{fact}
\begin{proof}
For a fixed copy $R \in \mathcal{R}'$, let $x_R \in V(R)$ be the vertex
guaranteed by Fact \ref{fact:fact1}.
If there is an overweight edge in $R$ which
does not intersect $x_R$, then we can find two
heavy vertex-disjoint copies of $K_r$ over the set of vertices $V(R) \cup L$, 
and this violates the maximality of $\mathcal{R}$. Therefore, 
all the overweight edges within $R$ are incident to $x_R$. Moreover, if
there are less than $r-1$ such edges, then we can find a copy of $K_R$ 
over the vertex set $\{x_R\} \cup L$ which contains at least $r-1$ overweight
edges. This contradicts the maximality of overweight edges of
$\mathcal{R}$. Therefore, all the edges incident to $x_R$ within
$R$ are overweight.
\end{proof}

Let $X$ be the subset of
vertices which are covered by copies of $K_r$ in $\mathcal{R}'$ that are
incident to an overweight edge (guaranteed by Fact \ref{fact:fact2}),
and let $Y$ be the vertices which are covered by copies of $K_r$ in
$\mathcal{R}'$ that are not in $X$.

\begin{fact} \label{fact:fact3}
For every $y \in Y$ and $R \in \mathcal{R} \setminus \mathcal{R}'$, $y$
is incident to $R$ by at most one overweight edge.
\end{fact}
\begin{proof}
Suppose that we are given vertices $x \in X$ and $y \in Y$ covered
by $R \in \mathcal{R}$. Without loss of generality, suppose that
$x$ is adjacent to $v_1, \cdots, v_{r-1} \in L$ by overweight edges.
By way of contradiction, suppose that there exists 
$R' \in \mathcal{R} \setminus \mathcal{R}'$ with $V(R') =
\{z_1, z_2, \cdots, z_r\}$ such that $\{y, z_1\}$ and $\{y, z_2\}$ are
both overweight. If $R'$ contains an edge $e$ other than $\{z_1, z_2\}$
that is overweight, then among the edges 
$\{x,v_1\}, \{y,z_1\}, \{y,z_2\}, e$ (which are all overweight), we 
can find at least three vertex-disjoint edges. Therefore we can find
three vertex-disjoint copies of $K_r$ over the vertex set
$V(R) \cup V(R') \cup L$. However, this contradicts the maximality of $\mathcal{R}$.
If there are no overweight edges within $R'$ other than (possibly) $\{z_1, z_2\}$, 
then the two copies of $K_r$ over the vertex sets
$\{x, v_1, \cdots, v_{r-1}\}, \{y, z_1, z_2, \cdots, z_{r-1}\}$ contain
at least $r+1$ overweight edges, while $R$ and $R'$ combined contain
at most $r$ overweight edges (see Fact \ref{fact:fact2}). Therefore
we conclude that there exists at most one overweight edge of
the form $\{y,z_i\}$.
\end{proof}

\begin{fact} \label{fact:fact4}
There does not exist a heavy $K_r$ over a vertex set of the form
$\{v_i, y_1, y_2, \cdots, y_{r-1}\}$ for $v_i \in L$ 
and $y_1, \cdots, y_{r-1} \in Y$. 
\end{fact}
\begin{proof}
Suppose that for some $v_i \in L$ and $y_1,\ldots,y_{r-1} \in Y$ the
vertices $\{v_1,y_1,\ldots,y_{r-1}\}$ induce a heavy $K_r$. 
Suppose that $\{y_1, \cdots, y_{r-1}\}$ are contained in $s$ disjoint copies 
$R_1, \cdots, R_s$ of $K_r$ belonging to 
$\mathcal{R}$, and let $x_1, \cdots, x_s$ be 
the `dominating' vertices of these $K_r$ guaranteed by Fact \ref{fact:fact2}
(note that $s \le r-1$).
If $s \le r-2$, then since each $x_i$ are incident to $L$ by at least $r-1$
overweight edges, we can find $s+1$ vertex-disjoint copies of a heavy $K_r$ over
the vertices $L \cup V(R_1) \cup \cdots \cup V(R_s)$. On the other hand, if $s = r-1$, then
there exists an index $j$ such that there exists $z \in V(R_j) \setminus \{x_j, y_1, \cdots, y_{r-1}\}$. Then by using the overweight edge $\{x_j, z\}$ 
(see Fact \ref{fact:fact2}) and the overweight edges 
between $\{x_1, \cdots, x_s\}$ and $L$, we can find
at least $s+1=r$ vertex-disjoint copies of a heavy $K_r$ over the
vertices $L \cup V(R_1) \cup \cdots \cup V(R_s)$. This contradicts the maximality of $\mathcal{R}$.
\end{proof}

For a set of vertices $T$, let $w(T) = \sum_{v_1,v_2 \in T} w(v_1,v_2)$. 
We will show
\[ \sum_{i = 1}^{r} \sum_{T \subset {Y \choose {r-1}}} w(\{v_i\} \cup T) > 
{r \choose 2}t r {|Y| \choose r-1}. \]
This inequality contradicts Fact \ref{fact:fact4}, showing that our original
assumption $\rho < \frac{n}{r}$ must be false.
Note that
\begin{align} \label{eq:trianglesum}
 &\sum_{i = 1}^{r} \sum_{T \subset {Y \choose {r-1}}} w(\{v_i\} \cup T) \nonumber 
  = {|Y|-1 \choose r-2} \sum_{i=1}^{r} \sum_{y \in Y} w(v_i, y)  + \frac{1}{2} r {|Y|-2 \choose r-3} \sum_{y_1 \in Y} \sum_{y_2 \in Y \setminus \{y_1\}} w(y_1,y_2)
\nonumber \\
  =& \, {|Y| \choose r-1} \left( 
	\frac{r-1}{|Y|} \sum_{i=1}^{r} \deg_w(v_i, Y)
	+ \frac{r(r-1)(r-2)}{2|Y|(|Y|-1)}\sum_{y \in Y} \deg_w(y, Y)
      \right) \nonumber \\
 \geq & \, {|Y| \choose r-1} \left( 
	\frac{r-1}{|Y|} \sum_{i=1}^{r} \deg_w(v_i, Y)
	+ \frac{r(r-1)(r-2)}{2|Y|^2}\sum_{y \in Y} \deg_w(y, Y)
      \right) 
\end{align}
where $\deg_w(v,Y)$ is the weighted degree of $v$ to vertices in $Y$.

For the first term on the right hand side of \eqref{eq:trianglesum}, we have
\begin{align}
  &\sum_{i=1}^{r} \deg_w(v_i, Y) \nonumber \\
  =&
  \sum_{i=1}^{r} \Big(\deg_w(v_i) - \deg_w(v_i, X) - \deg_w(v_i, V_R \setminus (X \cup Y)) - \deg_w(v_i, V \setminus V_R)) \Big) \nonumber \\
  \ge&
  \left(1 + (r-1)t + r\varepsilon \right)n - r \rho' - \left((r-2) + (r^2-r+2){r \choose 2}t\right) (\rho - \rho') - r{r \choose 2}t (n-r \rho). \nonumber
\end{align}
Since the coefficient of $n$ is positive for small enough $t$, we can
substitute $n > r\rho$ to get
\begin{equation}
  \sum_{i=1}^{r} \deg_w(v_i, Y) >
  r(r-1)t\rho + \left(2  - (r^2-r+2) {r \choose 2}t \right) (\rho - \rho') + r\varepsilon n.
  \label{eq:sum2}
\end{equation}

For the second term on the right hand side of \eqref{eq:trianglesum},
by Fact \ref{fact:fact3} and the fact $|Y|=(r-1)\rho'$, 
for a vertex $y \in Y$, we have
\begin{align*}
  \deg_w(y, Y)
  &\ge
  \left(\frac{1}{r} + \frac{r-1}{r}t + \varepsilon\right)n - \rho - {r \choose 2} t(n - \rho - |Y|) \\
  &=
  \left(\frac{1}{r} + \frac{r-1}{r}t - {r \choose 2}t + \varepsilon\right)n + {r \choose 2}t (r-1)\rho' - \left(1 - {r \choose 2}t\right) \rho.
\end{align*}
Since the coefficient of $n$ is positive for small enough $t$, we can
substitute $n > r\rho$ to get
\begin{equation} \label{eq:sum1}
  \deg_w(y, Y)  >  (r-1)t\rho - (r-1){r \choose 2}t(\rho - \rho')  + \varepsilon n.
\end{equation}

Using \eqref{eq:sum2} and \eqref{eq:sum1} in \eqref{eq:trianglesum},
\begin{align*}
& \frac{1}{{|Y| \choose r-1}} \sum_{i = 1}^{r} \sum_{T \subset {Y \choose {r-1}}} w(\{v_i\} \cup T) \\
  \ge& \frac{r-1}{|Y|} \Bigg( r(r-1)t\rho + \left(2  - (r^2-r+2) {r
      \choose 2}t \right) (\rho - \rho') + r\varepsilon n \Bigg)\\
&\quad + \frac{r(r-1)(r-2)}{2|Y|} \left( (r-1)t\rho - (r-1){r \choose
    2}t(\rho - \rho')  + \varepsilon n \right)
\end{align*}
If $t$ is small enough, then the coefficient of $\rho$ in the right hand side is
positive. Hence we can substitute $\rho \geq \rho'$ to get,
\begin{align*}
 \frac{1}{{|Y| \choose r-1}} \sum_{i = 1}^{r} \sum_{T \subset {Y
     \choose {r-1}}} w(\{v_i\} \cup T) \ge& \left(\frac{r(r-1)^2}{|Y|}
   + \frac{r(r-1)^2(r-2)}{2|Y|}\right)\left( t \rho' +
   \frac{\varepsilon n}{r-1} \right) \\
=& \frac{r^2(r-1)^2t\rho'}{2|Y|} + \frac{r^2(r-1)}{2|Y|}\varepsilon n \\
> & \frac{r^2(r-1)^2t\rho'}{2|Y|} \\
= & \frac{r(r-1)t}{|Y|}\binom{r}{2}\rho' \\
= & rt\binom{r}{2}.
\end{align*}
Since $|Y| = (r-1)\rho'$ and $\rho' \not= 0$ by Fact \ref{fact:fact15}, we have
\begin{equation*}
 \frac{1}{{|Y| \choose r-1}} \sum_{i = 1}^{r} \sum_{T \subset {Y
     \choose {r-1}}} w(\{v_i\} \cup T) > r\binom{r}{2} t.
\end{equation*}
contradicting Fact \ref{fact:fact4}. Thus our initial assumption
that $\rho < \frac{n}{r}$ must be false.   Hence  $\delta(r,t) \le \frac{1}{r} + \frac{r-1}{r}t + \varepsilon$ for every positive $\varepsilon$, and our claimed upper bound follows.
\end{proof}

It is worth noting that this proof gives a value of $t_r$ of
$\frac{4}{\binom{r}{2}\left(r^3-r^2 -2r + 4\right)}$. Thus
for $r=3$, we have $t_3 \ge \frac{1}{12}$.

For general values of $t$, we suggest 
two approaches which establish some upper bound (that unfortunately does
not match the lower bound given in Proposition \ref{P:lb}).

\subsection{First approach: hypergraphs}

In our first approach, we 
reduce our problem into the problem of finding a perfect matching
in hypergraphs as in Observation \ref{obs:simplecase}. 
The following lemma establishes the minimum number of
heavy $K_r$'s that each vertex must belong to in a given edge-weighted graph.

\begin{lemma} \label{lem:heavycliques}
If $w$ is a weight function with minimum weighted degree at least
$\delta n$, then every vertex is in at least 
$\left(1 - \frac{1-\delta}{1-t}\right) \binom{n-1}{r-1}$ cliques of size
$r \geq 3$ with weight at least $t\binom{r}{2}$.
\end{lemma}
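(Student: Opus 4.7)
The plan is to fix an arbitrary vertex $v \in V(K_n)$ and bound the number of heavy $r$-cliques through $v$ by an averaging argument. Let $S$ be a uniformly random $(r-1)$-subset of $V(K_n)\setminus\{v\}$ and write $W(S)$ for the total weight of the $r$-clique on $\{v\}\cup S$. The heart of the proof is to establish the estimate
\[ \mathbb{E}\bigl[W(S)\bigr] \;\geq\; \delta \binom{r}{2}. \]

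To prove this, I would split $W(S)$ as the sum of the weight $A(S)$ of the $r-1$ edges from $v$ into $S$ and the weight $B(S)$ of the $\binom{r-1}{2}$ edges inside $S$. Since each vertex of $V(K_n)\setminus\{v\}$ lies in $S$ with probability $\tfrac{r-1}{n-1}$ and each pair of such vertices lies in $S$ with probability $\tfrac{(r-1)(r-2)}{(n-1)(n-2)}$, linearity of expectation gives
\[ \mathbb{E}[A(S)] = \tfrac{r-1}{n-1}\deg_w(v) \quad \text{and} \quad \mathbb{E}[B(S)] = \tfrac{(r-1)(r-2)}{(n-1)(n-2)}\, W^{-v}, \]
where $W^{-v}$ denotes the total edge weight of the subgraph induced on $V(K_n)\setminus\{v\}$. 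Using the min-degree hypothesis at \emph{every} vertex together with the handshake identity, one gets $2W^{-v} = \sum_{u\neq v}\deg_w(u) - \deg_w(v) \geq (n-1)\delta n - \deg_w(v)$. Setting $d := \deg_w(v)$ and substituting, I would check that the coefficient of $d$ in the resulting lower bound is positive, so $d \geq \delta n$ may be applied, and a short calculation collapses the expression cleanly to $\binom{r}{2}\cdot\tfrac{n}{n-1}\cdot \delta \geq \delta\binom{r}{2}$.

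The second step is a Markov-type bound on the nonnegative random variable $\binom{r}{2} - W(S)$, whose expectation is at most $(1-\delta)\binom{r}{2}$ by the previous step. Whenever the $r$-clique on $\{v\}\cup S$ fails to be heavy, its deficit strictly exceeds $(1-t)\binom{r}{2}$, so
\[ \Pr\bigl[\{v\}\cup S\text{ is not heavy}\bigr] \;<\; \frac{(1-\delta)\binom{r}{2}}{(1-t)\binom{r}{2}} \;=\; \frac{1-\delta}{1-t}. \]
Multiplying by $\binom{n-1}{r-1}$ and subtracting from $\binom{n-1}{r-1}$ yields the claimed lower bound on the number of heavy $r$-cliques through $v$.

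The main obstacle I expect is the clean algebraic cancellation in the first step. Using only $\deg_w(v)\geq \delta n$ together with the trivial $W(S)\geq A(S)$ yields only $\mathbb{E}[W(S)]\geq \tfrac{2\delta}{r}\binom{r}{2}$, which matches the target constant only for $r=2$. One is therefore forced to invoke the minimum-degree hypothesis at every \emph{other} vertex to control $W^{-v}$, and it is not a priori obvious that the $d$-dependence of the two contributions conspires to leave exactly $\delta\binom{r}{2}$ on the nose; verifying this identity is the technical crux of the argument.
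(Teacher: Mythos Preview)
Your proposal is correct and is essentially the paper's own argument recast in probabilistic language: your $\mathbb{E}[W(S)]$ is the paper's quantity $S_v/\binom{n-1}{r-1}$, your linearity-of-expectation computation of $\mathbb{E}[A(S)]+\mathbb{E}[B(S)]$ reproduces the paper's double count of $S_v$ edge by edge, and your Markov step is exactly the paper's upper bound $S_v\le \alpha_v\binom{r}{2}+\bigl(\binom{n-1}{r-1}-\alpha_v\bigr)t\binom{r}{2}$ rearranged. Both routes land on the same intermediate value $\mathbb{E}[W(S)]\ge \delta\binom{r}{2}\cdot\tfrac{n}{n-1}$, so the ``clean cancellation'' you flagged as the crux is genuine and goes through.
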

\begin{proof}
Let $w$ be an arbitrary weight function, $v$ be an arbitrary
vertex, and let $\alpha_v$ be the number of $K_r$'s of weight
at least $t\binom{r}{2}$ containing $v$.  Now letting $S_v$ be the sum of the
weights of all $\binom{n-1}{r-1}$ $K_r$'s containing $v$, we have that 
\[ S_v \leq \alpha_v \binom{r}{2} + \paren{\binom{n-1}{r-1} -
  \alpha_v}t\binom{r}{2}.\] 
Let $W$ be the total weight of $w$. Since edges incident with $v$ occur
in $\binom{n-2}{r-2}$ $K_r$'s containing $v$ and the edges not incident to $v$ occur
in $\binom{n-3}{r-3}$ such $K_r$'s, we have
\[  S_v \geq \binom{n-3}{r-3}W + 
\paren{\binom{n-2}{r-2} - \binom{n-3}{r-3}} \deg_w(v). \] 
Combining these inequalities we have
\begin{align*}
\alpha_v &\geq \frac{1}{1-t}
\binom{n-1}{r-1} \paren{\frac{2(n-r)}{r(n-1)(n-2)} \deg_w(v) +
    \frac{2(r-2)}{r(n-1)(n-2)}W - t} \\
&\ge \frac{1}{1-t}
\binom{n-1}{r-1}\paren{\delta \frac{n}{n-1} - t} \\
&\ge \frac{\delta - t}{1-t}\binom{n-1}{r-1}. 
\end{align*}
\end{proof}

We  now apply Daykin and H\"{a}ggkvist's theorem \cite{Daykin:IndependentHypergraph} 
which asserts  that an $r$-uniform
hypergraph has a perfect matching if every vertex of it lies in at least
$\paren{1-\frac{1}{r}}\paren{\binom{n-1}{r-1}-1}$ hyperedges. This gives
the following bound:
\begin{proposition}
 For every $t \in (0,1]$ and $r \geq 3$ we have $\delta(r,t) \leq 1 - \frac{1-t}{r}$.
\end{proposition}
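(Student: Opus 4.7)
The plan is to reduce the problem to finding a perfect matching in an auxiliary $r$-uniform hypergraph and then apply Daykin and H\"aggkvist's theorem. Fix an arbitrary $\varepsilon>0$, consider any edge-weighting $w$ of $K_n$ with $\delta_w(K_n)\ge\left(1-\frac{1-t}{r}+\varepsilon\right)n$, and let $H$ be the $r$-uniform hypergraph on $V(K_n)$ whose hyperedges are exactly the heavy $r$-cliques, that is, those $r$-subsets $T$ of vertices with $w(T)\ge t\binom{r}{2}$. Since a perfect matching in $H$ corresponds precisely to a heavy $K_r$-factor in $K_n$, it suffices to produce such a matching.

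The next step is to verify the hypothesis of Daykin and H\"aggkvist's theorem, namely that every vertex of $H$ is contained in at least $\left(1-\frac{1}{r}\right)\left(\binom{n-1}{r-1}-1\right)$ hyperedges. Applying Lemma \ref{lem:heavycliques} with $\delta = 1-\frac{1-t}{r}+\varepsilon$ yields
\[
\frac{\delta-t}{1-t}\binom{n-1}{r-1}
= \left(1-\frac{1}{r}+\frac{\varepsilon}{1-t}\right)\binom{n-1}{r-1}
\]
heavy $r$-cliques through each vertex, which comfortably exceeds the required Daykin-H\"aggkvist threshold once $n$ is sufficiently large in terms of $r,t,\varepsilon$. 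Hence $H$ admits a perfect matching, yielding a heavy $K_r$-factor in $K_n$, and therefore $\delta(r,t,n)\le\left(1-\frac{1-t}{r}+\varepsilon\right)n$ for all large enough $n$ divisible by $r$. Taking $\limsup_{n\to\infty}$ and then sending $\varepsilon\to 0$ gives the desired bound $\delta(r,t)\le 1-\frac{1-t}{r}$.

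The substantive content has already been packaged into Lemma \ref{lem:heavycliques} and the Daykin-H\"aggkvist theorem, so no serious obstacle remains. The only point requiring mild care is the slight discrepancy between the quantity $\left(1-\frac{1}{r}\right)\binom{n-1}{r-1}$ that Lemma \ref{lem:heavycliques} would deliver at the exact threshold $\delta=1-\frac{1-t}{r}$ and the strictly larger requirement $\left(1-\frac{1}{r}\right)\left(\binom{n-1}{r-1}-1\right)$ of Daykin-H\"aggkvist; this is precisely why one works with a positive additive slack $\varepsilon$ throughout and only passes to the limit at the end.
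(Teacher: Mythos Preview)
Your proof is correct and follows the paper's approach exactly: apply Lemma~\ref{lem:heavycliques} to count heavy $K_r$'s through each vertex, then invoke Daykin--H\"aggkvist on the hypergraph of heavy cliques. One small slip in your closing paragraph: the Daykin--H\"aggkvist threshold $\left(1-\frac{1}{r}\right)\left(\binom{n-1}{r-1}-1\right)$ is strictly \emph{smaller} than $\left(1-\frac{1}{r}\right)\binom{n-1}{r-1}$, not larger, so the $\varepsilon$ is not needed to bridge any gap there (it is still needed, of course, for the usual limiting argument).
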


H\`{a}n, Person and Schacht \cite{Han:PerfectHypergraphMatchings} 
have conjectured that Daykin and H\"aggkvist's theorem can be improved, 
and an $r$-uniform hypergraph has a perfect matching if every vertex lies in
at least $(1-\paren{\frac{r-1}{r}}^{r-1} - \lilOh{1})\binom{n}{r-1}$
hyperedges.  If this conjecture were proved, then we would 
have $\delta(r,t) \leq 1 - (1-t)\paren{\frac{r-1}{r}}^{r-1}$. 

Since in \cite{Han:PerfectHypergraphMatchings} the
conjecture was proved  for $r=3$, we have that $\delta(3,t) \leq \frac{5}{9} +
\frac{4}{9}t$.  It is worth noting that this technique cannot be 
applied to obtain an upper bound matching  Proposition~\ref{P:lb}.
Consider the case when $r=3$ and $t = \frac{2}{3}$. 
The lower bound from Proposition \ref{P:lb}
reads as $\delta(3,\frac{2}{3}) \ge \frac{7}{9}$. To obtain 
a matching upper bound using this method, we would need to improve the conclusion
of Lemma \ref{lem:heavycliques} so that in every edge-weighted graph
of minimum degree at least $\frac{7}{9}n$, every vertex is contained
in at least $(\frac{5}{9}+o(1))\binom{n}{2}$ copies of $K_3$. However,
the following graph has minimum degree $\frac{29}{36}n$, and there are 
vertices which are contained in at most $\frac{319}{648}n^2$ copies of
$K_3$.  Let $A \cup B$ be a vertex partition such that $A$ has size
$\frac{29}{36}n$ and $B$ has size $\frac{7}{36}n$.  First, assign weight
$1$ to all the edges connecting $A$ and $B$ and give weight $1$ to an
$\frac{11}{18}n$-regular graph on $A$.  Give weight $0$ to each of the
remaining edges.  The minimum weighted degree of this graph is
$\frac{29}{36}n > \frac{7}{9}n > \frac{2}{3}n$, so this graph has a
triangle factor by the \HS  \ theorem.  However, every vertex in $B$ is only in
$\frac{29}{36}n \cdot \frac{11}{18
}n \cdot \frac{1}{2} =
\paren{\frac{319}{648} + \lilOh{1}}\binom{n}{2} < \paren{\frac{5}{9} +
  \lilOh{1}}\binom{n}{2}$ triangles. Similar constructions
can be made for other values of $r$ and $t$ as well.

\subsection{Second approach: induction}
\label{subsec:secondappraoch}
We  improve the upper bound by using  two reductive schemes to build a
$K_r$-factor out of a $K_{r'}$-factor of the graph (or a large portion of
the graph). 

{\bf Scheme 1.} Suppose $r = pq$ with $p, q > 1$, and let $w$ be an arbitrary weight
function with minimum weighted degree $\delta n$.  Let $\mathcal{K}$
be an arbitrary $K_p$-factor of $K_n$ with minimum average weight
$t_p$ and consider the weight function
$w_\mathcal{K}$ on $K_{n/p}$ defined as follows.
Associate to each vertex in $K_{n/p}$ a distinct clique in
$\mathcal{K}$; the weight of an edge is the average weight in $w$ of
the edges between the corresponding cliques.   Now the minimum
weighted degree under $w_\mathcal{K}$ is at least $\frac{p \delta n -
  p (p-1)}{p^2} = \paren{\delta - \frac{p-1}{n}}\frac{n}{p}$.  Letting
$K'$ be an arbitrary $K_q$-factor of this graph with minimum average
weight $t_q$, the factors $\mathcal{K}$ and $K'$ induce a $K_{pq}$-factor in
$K_n$ with minimum weight at least $t_q\binom{q}{2}p^2 +
t_p\binom{p}{2}q$.  Thus $\delta(pq,t,n) \leq \max\set{\delta(p,t,n),
  \delta(q,t,\frac{n}{p}) + \frac{p-1}{p}}$. Consequently, we
  have $\delta(pq,t) \le \max\set{\delta(p,t), \delta(q,t)}$.

{\bf Scheme 2.} Let $\delta' = \max\set{\delta(r-1,t), \frac{1}{2} + \frac{t}{2}}$.
We prove that $\delta(r,t) \le \delta'$. Let $\varepsilon$ be an arbitrary 
fixed positive real, and assume that $n_0$ is large enough so that 
$\delta(r-1,t,n) \le (\delta(r-1,t) + \frac{\varepsilon}{2})n$ for all $n \ge n_0$. Assume that
we are given an edge-weighted graph $G$ on $n \ge 2n_0$ vertices 
with minimum degree at least $(\delta'+\varepsilon)n$.
  We partition randomly the vertices of $G$
  into a set $A$ of size $\frac{r-1}{r}n$ and a set $B$ of size
  $\frac{1}{r} n = k$.  By the Chernoff-Hoeffding inequalities, for large 
  enough $n$, there is
  such a partition which additionally satisfies that for every
  vertex the weighted degree into $A$ is at least
  $(\delta'+\frac{\varepsilon}{2}) \frac{r-1}{r} n$ and into $B$ is at
  least $\delta' \frac{1}{r} n$. By the assumption on $\delta'$, 
  we can find a $K_{r-1}$-factor $\mathcal{K}_A$ on $A$ with 
  minimum average weight $t$. 
  
  Using $\mathcal{K}_A$ we construct a complete weighted bipartite
  graph $H$, where the vertices on one side are associated with
  cliques in $\mathcal{K}_A$ and the vertices on the other side are
  associated with vertices in $B$. For a clique $K \in \mathcal{K}_A$
  and a vertex $v \in B$, we assign as weight of the edge $(K, v)$,
  the average of the weights of the edges between $v$ and the vertices
  in $K$.  Notice that the minimum weighted degree of $H$ is at least
  $\delta'k \ge (\frac{1}{2} + \frac{t}{2})k$. Recall that 
  $H(t)$ is the unweighted subgraph of $H$ consisting
  of edges with weight at least $t$. By a similar argument as in
  Observation \ref{obs:simplecase}, the minimum degree in $H(t)$ is at least
  $\frac{k}{2}$. Thus by Hall's theorem, there is a  perfect
  matching $\mathcal{M}$ in $H(t)$.  
  
  Now notice that $\mathcal{K}_A$ and $\mathcal{M}$ lift to a
  $K_r$-factor of $G$ with minimum weight $t \binom{r-1}{2} + t
  (r-1) = t \binom{r}{2}$. Consequently, $\delta(r,t,n) \leq
  (\delta' + \varepsilon)n$. Since $\varepsilon$ can be arbitrarily small, 
  we have $\delta(r,t) \le \delta' = \max\set{\delta(r-1,t), \frac{1}{2} + \frac{t}{2}}$.



By Proposition \ref{P:lb}, Observation \ref{obs:simplecase}, and Scheme 2, 
we obtain the following theorem.
\begin{theorem} \label{thm:main}
For every $r \geq 3$ and $t \in (0,1]$, 
\[ \frac{1}{r} + \left( 1- \frac{1}{r} \right) t \leq \delta(r,t) \leq \frac{1}{2} +
\frac{t}{2} .\]
\end{theorem}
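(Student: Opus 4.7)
The lower bound $\delta(r,t) \geq \frac{1}{r} + (1-\frac{1}{r})t$ is exactly the content of Proposition~\ref{P:lb}, so no further argument is needed there. The plan for the upper bound $\delta(r,t) \leq \frac{1}{2} + \frac{t}{2}$ is a straightforward induction on $r \geq 2$, using Observation~\ref{obs:simplecase} as the base case and Scheme 2 as the inductive step.

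For the base case $r = 2$, Observation~\ref{obs:simplecase} gives the exact value $\delta(2,t) = \frac{1+t}{2} = \frac{1}{2} + \frac{t}{2}$. For the inductive step, assume that $\delta(r-1,t) \leq \frac{1}{2} + \frac{t}{2}$. Scheme 2 of Section~\ref{subsec:secondappraoch} yields the recursive inequality
\[ \delta(r,t) \leq \max\left\{\delta(r-1,t),\ \frac{1}{2} + \frac{t}{2}\right\}, \]
and both terms on the right are at most $\frac{1}{2} + \frac{t}{2}$ by the inductive hypothesis, which closes the induction.

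The real work has already been done inside Scheme 2: a random partition of $V$ into sides of sizes $\frac{r-1}{r}n$ and $\frac{1}{r}n$, an application of the inductive bound to produce a heavy $K_{r-1}$-factor on the large side, and a Hall-type argument on an auxiliary weighted bipartite graph (using Observation~\ref{obs:simplecase} for the minimum-degree condition in the heavy bipartite subgraph) combine to assemble the desired heavy $K_r$-factor. Consequently there is no genuine remaining obstacle in this assembly: the recursion is monotone, Scheme 1 contributes nothing new here because the right-hand side does not depend on $r$, and the induction telescopes cleanly from the $r=2$ base. The only ``obstacle''—inherited from rather than created by this theorem—is that the threshold $\frac{1}{2} + \frac{t}{2}$ forced by the bipartite Hall-style step keeps the upper bound strictly above the lower bound $\frac{1}{r} + (1-\frac{1}{r})t$ from Proposition~\ref{P:lb} for $r \geq 3$, which is why closing the gap remains open.
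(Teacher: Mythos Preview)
Your proposal is correct and matches the paper's own proof exactly: the lower bound is Proposition~\ref{P:lb}, and the upper bound follows by induction on $r$ with Observation~\ref{obs:simplecase} as the base case and Scheme~2 supplying the recursive inequality $\delta(r,t)\le\max\{\delta(r-1,t),\tfrac{1}{2}+\tfrac{t}{2}\}$. Your remark that Scheme~1 is not needed here also agrees with the paper's comment following the theorem.
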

For the special case related to triangle factors that we discussed in the beginning, we have
$\frac{7}{9} \leq \delta(3,\frac{2}{3}) \leq \frac{5}{6}$.
We note that Theorem \ref{thm:main} has been proved without using Scheme~1,
however, Scheme~1 implies that if there is an improvement on the upper
bound for any $r$, then there is an improvement in the upper bound for
an infinite class of $r'$. For example, for any
fixed $k$, $\delta(r^k,t) \leq \delta(r,t)$.  Because of the
dependence on the bipartite matching result (which cannot be
improved) a similar statement does not hold using just Scheme~2.

\subsection{Open Question} 
In this article, we proposed the study of the function $\delta(r,t)$.
Based on the evidence given by Proposition \ref{P:lb} and Theorem \ref{thm:upper},
we make the following conjecture.

\begin{conj} 
For every $r \ge 2$ and $t \in (0,1]$, 
\[ \delta(r,t) = \frac{1}{r} + \left(1 - \frac{1}{r}\right)t. \]
\end{conj}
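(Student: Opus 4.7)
The plan is to establish the matching upper bound $\delta(r,t) \le \frac{1}{r} + (1-\frac{1}{r})t$ via the absorbing method. This technique has become the standard vehicle for Dirac-type embedding theorems, including alternative proofs of the \HS\ theorem itself, and it appears the most likely to interpolate between Theorem \ref{thm:upper} (which handles small $t$) and the full range of $t \in (0,1]$, where the swapping argument of Theorem \ref{thm:upper} breaks down because its crucial coefficients cease to be positive.

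Fix a small $\varepsilon > 0$ and suppose $w$ is an edge-weighting of $K_n$ with $\delta_w(K_n) \ge \paren{\tfrac{1}{r} + (1-\tfrac{1}{r})t + \varepsilon}n$. First I would prove an \emph{absorbing lemma}: there is a set $A \subset V(K_n)$ with $|A| \le \beta n$ (for some $\beta \ll \varepsilon$) and $r \mid |A|$ such that for any $U \subset V(K_n) \setminus A$ with $|U| \le \beta^2 n$ and $r \mid |A \cup U|$, the weighted graph on $A \cup U$ admits a heavy $K_r$-factor. The standard construction proceeds by showing that each ordered $r$-tuple $(u_1, \ldots, u_r)$ has many constant-size absorbing gadgets — sets $F$ admitting a heavy $K_r$-factor both on $F$ and on $F \cup \set{u_1, \ldots, u_r}$ — and then selecting $A$ by a random subset argument followed by greedy rebalancing. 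The counting step relies on Lemma \ref{lem:heavycliques}, which already supplies quadratically many heavy $K_r$'s through any vertex.

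Second, I would establish an \emph{almost-cover lemma}: after removing $A$, the remaining graph still has weighted minimum degree well above the threshold and thus admits a heavy collection of vertex-disjoint copies of $K_r$ leaving at most $\beta^2 n$ vertices uncovered. This can be attempted via the weighted regularity lemma, reducing to a fractional $K_r$-cover problem on the reduced weighted graph; the lower bound construction of Proposition \ref{P:lb} then dictates the right stability dichotomy: either the graph is $\xi$-far from that extremal configuration, in which case there is enough slack to close the fractional gap and round, or it is $\xi$-close, in which case one analyzes the structure directly. Applying $A$ to the leftover then upgrades the almost-cover to a full heavy $K_r$-factor.

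The main obstacle will be the near-extremal case. The sharpness example of Proposition \ref{P:lb} places $k-1$ ``full-weight'' vertices in a set $A$ and $k(r-1)+1$ ``light-weight'' vertices in $B$, and any near-threshold weighting can resemble this almost exactly. Each heavy $K_r$ is then forced to use at least one vertex of a very small dominating set, so feasibility hinges on a tight combinatorial matching between $A$ and $(r-1)$-subsets of $B$; a delicate extremal analysis generalizing the swapping scheme of Facts \ref{fact:fact1}--\ref{fact:fact4} must rule out slight perturbations uniformly in $t$. A secondary obstacle, absent from the classical \HS\ setting, is that weights are real-valued rather than $\set{0,1}$, so every threshold inequality degrades continuously; ensuring that the absorbing parameter $\beta$, the stability radius $\xi$, and the slack $\varepsilon$ can be chosen compatibly across the full range $t \in (0,1]$ — in particular near $t=1$, where the weighted problem collapses to the unweighted one and there is essentially no slack to exploit — is where I expect the argument will require the most care.
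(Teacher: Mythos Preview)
The statement you are addressing is \emph{Conjecture~1} in the paper, not a theorem: the paper offers no proof whatsoever, only the evidence of Proposition~\ref{P:lb} and Theorem~\ref{thm:upper}, and it explicitly lists the determination of $\delta(r,t)$ as an open problem. There is therefore no ``paper's own proof'' to compare your proposal against.

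Your submission is, correspondingly, not a proof but a research outline. You yourself label the near-extremal analysis and the uniform choice of parameters near $t=1$ as unresolved obstacles, so by your own accounting the argument is incomplete. A few specific points where the sketch would need real work before it could be called a proof: (i) Lemma~\ref{lem:heavycliques} only gives $\paren{1 - \frac{1-\delta}{1-t}}\binom{n-1}{r-1}$ heavy cliques through a vertex, and at $\delta = \tfrac{1}{r} + (1-\tfrac{1}{r})t$ this coefficient is $\tfrac{1}{r}$, which is generally too small to build absorbers by the na\"ive greedy linking argument --- you would need a sharper supersaturation bound or a more delicate gadget; (ii) the ``almost-cover via weighted regularity'' step hides exactly the difficulty the paper could not overcome for $t \ge t_r$, namely that the swapping inequalities in Facts~\ref{fact:fact1}--\ref{fact:fact4} lose their sign; (iii) at $t=1$ the problem degenerates to the unweighted \HS\ theorem with \emph{zero} slack, so no absorbing argument with a positive $\varepsilon$ can close that endpoint --- one would at best obtain the result for $t$ bounded away from $1$. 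In short, the plan is a plausible line of attack on an open problem, but it is not a proof, and the paper does not claim one either.
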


The function $\delta(r,t)$ shows different behavior 
from its non-weighted counterpart (which is related to the Hajnal-Szemer\'edi theorem).
As one can see from the discussion of Subsection \ref{subsec:secondappraoch},
the approach of examining the function by fixing $t$ and varying $r$,
opens up new possibilities which have no counterpart in
the Hajnal-Szemer\'edi theorem. 
We note that our results suggest, but do not quite establish, 
the fact that for fixed $t$, $\delta(r,t)$ is a \emph{decreasing} function of $r$. 
Further note that the weighted case has an extra power coming from the
ability to include any edge in a $K_r$-factor, even if that edge has weight $0$.  
This suggests that there could be a relation to results of 
Kuhn and Osthus~\cite{Kuhn:PerfectPacking} on the existence of $H$-factors in graphs.

\bigskip

\noindent \textbf{Acknowledgement.} We thank Wojciech Samotij
for his valuable comments.

\bibliographystyle{siam} 
\bibliography{HajnalSzemeredi}

\end{document}